\newfont{\bcb}{msbm10}
\newfont{\matb}{cmbx10}
\newfont{\got}{eufm10}
\newtheorem{theorem}{Theorem}
\newtheorem{proposition}[theorem]{Proposition}
\newtheorem{corollary}[theorem]{Corollary}
\theoremstyle{definition}
\theoremstyle{remark}
\newtheorem{remark}[theorem]{Remark}
\numberwithin{equation}{section}
\begin{document}

\title[Density property]{A density property \\ of Henselian valued fields}

\author[Krzysztof Jan Nowak]{Krzysztof Jan Nowak}


\subjclass[2000]{13F30, 12J15, 14G27.}

\keywords{Density property, Hensel's lemma in several variables,
implicit function theorem, inverse mapping theorem}



\begin{abstract}
We give an elementary proof of a version of the implicit function
theorem over Henselian valued fields $K$. It yields a density
property for such fields (introduced in a joint paper with
J.~Koll{\'a}r), which is indispensable for ensuring reasonable
topological and geometric properties of algebraic subsets of
$K^{n}$.
\end{abstract}

\maketitle

\vspace{1ex}

Following~\cite{K-N} (see also~\cite{Now2}), we say that a
topological field $K$ satisfies the {\it density property} if the
following equivalent conditions hold.
\begin{enumerate}
\item If $X$ is a smooth, irreducible $K$-variety and
    $\emptyset\neq U\subset X$ is a Zariski open subset,
then $U(K)$ is dense in $X(K)$ in the $K$-topology.
\item If $C$ is a smooth, irreducible $K$-curve and
    $\emptyset\neq U$ is a Zariski open subset, then $U(K)$ is
    dense in
$C(K)$ in the $K$-topology.
\item If $C$ is a smooth, irreducible $K$-curve, then $C(K)$
    has no isolated points.
\end{enumerate}
This property is indispensable for ensuring reasonable topological
and geometric properties of algebraic subsets of $K^{n}$;
see~\cite{Now2} for the case where the ground field $K$ is a
Henselian rank one valued field. For Henselian non-trivially
valued fields, it can be directly deduced from the Jacobian
criterion for smoothness and the implicit function theorem, as
stated in \cite[Theorem~7.4]{P-Z} or
\cite[Proposition~3.1.4]{G-G-MB}. Here we give elementary proofs
of some versions of the inverse mapping and implicit function
theorems.

We begin with a simplest version of Hensel's lemma in several
variables, studied by Fisher~\cite{Fish}. Let
$\mathfrak{m}^{\times n}$ stand for the $n$-fold Cartesian product
of $\mathfrak{m}$ and $R^{\times}$ for the set of units of $R$.
The origin $(0,\ldots,0) \in R^{n}$ is denoted by $\mathbf{0}$.

\vspace{1ex}

\begin{em}
{\bf (H)} Assume that a ring $R$ satisfies Hensel's conditions
(i.e.\  it is linearly topologized, Hausdorff and complete) and
that an ideal $\mathfrak{m}$ of $R$ is closed. Let $f = (f_{1},
\ldots, f_{n})$ be an $n$-tuple of restricted power series $f_{1},
\ldots, f_{n} \in R\{ X \}$, $X = (X_{1},\ldots,X_{n})$, $J$ be
its Jacobian determinant and $a \in R^{n}$. If $f(\mathbf{0}) \in
\mathfrak{m}^{\times n}$ and $J(\mathbf{0}) \in R^{\times}$, then
there is a unique $a \in \mathfrak{m}^{\times n}$ such that $f(a)
= \mathbf{0}$.
\end{em}

\begin{proposition}\label{H-1}
Under the above assumptions, $f$ induces a bijection
$$ \mathfrak{m}^{\times n} \ni x \to f(x) \in \mathfrak{m}^{\times n} $$
 of $\mathfrak{m}^{\times n}$ onto itself.
\end{proposition}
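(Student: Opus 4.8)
The plan is to deduce the whole statement from Hensel's lemma (H) applied to suitable translates of $f$. First I would check that the assignment $x \mapsto f(x)$ actually maps $\mathfrak{m}^{\times n}$ into itself. Writing each component as $f_i(X) = f_i(\mathbf{0}) + \sum_{|\alpha| \geq 1} c_{i,\alpha} X^{\alpha}$, every monomial $X^{\alpha}$ with $|\alpha| \geq 1$ evaluated at a point $x \in \mathfrak{m}^{\times n}$ lands in $\mathfrak{m}$, since $\mathfrak{m}$ is an ideal and at least one factor of the monomial belongs to $\mathfrak{m}$. Because the series is restricted and $\mathfrak{m}$ is closed, the sum of these higher-order terms again lies in $\mathfrak{m}$, whence $f_i(x) - f_i(\mathbf{0}) \in \mathfrak{m}$. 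As $f_i(\mathbf{0}) \in \mathfrak{m}$ by hypothesis, we get $f_i(x) \in \mathfrak{m}$ for every $i$, so $f(x) \in \mathfrak{m}^{\times n}$ and the map is well defined.

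The core of the argument is that, for a fixed target value, the preimage is governed by (H). Given $b = (b_1, \ldots, b_n) \in \mathfrak{m}^{\times n}$, I would consider the translated tuple $g := f - b = (f_1 - b_1, \ldots, f_n - b_n)$. Each $f_i - b_i$ is again a restricted power series, since $b_i \in R \subset R\{X\}$ is a constant; translation by a constant leaves all partial derivatives unchanged, so the Jacobian determinant of $g$ equals $J$, and in particular its value at $\mathbf{0}$ is $J(\mathbf{0}) \in R^{\times}$. Moreover $g(\mathbf{0}) = f(\mathbf{0}) - b \in \mathfrak{m}^{\times n}$, because $\mathfrak{m}$ is closed under subtraction. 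Hence $g$ meets the hypotheses of (H), which yields a unique $a \in \mathfrak{m}^{\times n}$ with $g(a) = \mathbf{0}$, that is, $f(a) = b$.

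This single application of (H) delivers both required properties at once. Surjectivity is immediate: every $b \in \mathfrak{m}^{\times n}$ has the preimage $a$ just produced. Injectivity follows from the uniqueness clause in (H): if $f(a) = f(a') = b$ with $a, a' \in \mathfrak{m}^{\times n}$, then both $a$ and $a'$ solve $g(x) = \mathbf{0}$ for $g = f - b$, so $a = a'$. Thus $x \mapsto f(x)$ is a bijection of $\mathfrak{m}^{\times n}$ onto itself.

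I do not anticipate a serious obstacle. The one point requiring a little care is the well-definedness step, where one must invoke the restrictedness of the series together with the closedness of $\mathfrak{m}$ to be sure that the infinite sum of the higher-order terms remains inside $\mathfrak{m}$. Once translation by a constant is seen to preserve every hypothesis of (H), the bijectivity is essentially a restatement of the existence-and-uniqueness assertion of (H).
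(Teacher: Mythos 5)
Your proof is correct and is essentially the paper's own argument: the paper likewise disposes of the whole statement by applying condition (H) to the translated series $f(X) - y$ for each $y \in \mathfrak{m}^{\times n}$, with existence giving surjectivity and uniqueness giving injectivity. Your additional check that $f$ maps $\mathfrak{m}^{\times n}$ into itself is a detail the paper leaves implicit, and it is handled correctly.
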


\begin{proof}
For any $y \in\mathfrak{m}^{\times n}$, apply condition (H) to the
restricted power series $f(X) - y$.
\end{proof}

If, moreover, the pair $(R,\mathfrak{m})$ satisfies Hensel's
conditions (i.e.\ every element of $\mathfrak{m}$ is topologically
nilpotent), then condition (H) holds by \cite[Chap.~III, \S
4.5]{Bour}.

\begin{remark}\label{rem-char}
Henselian local rings can be characterized both by the classical
Hensel lemma and by condition (H): a local ring $(R,\mathfrak{m})$
is Henselian iff $(R,\mathfrak{m})$ with the discrete topology
satisfies condition (H) (cf.~~\cite[Proposition~2]{Fish}).
\end{remark}

Now consider a Henselian local ring $(R,\mathfrak{m})$. Let $f =
(f_{1}, \ldots, f_{n})$ be an $n$-tuple of polynomials $f_{1},
\ldots, f_{n} \in R[ X ]$, $X = (X_{1},\ldots,X_{n})$ and $J$ be
its Jacobian determinant.

\begin{corollary}\label{H-2}
Suppose that $f(\mathbf{0}) \in \mathfrak{m}^{\times n}$ and
$J(\mathbf{0}) \in R^{\times}$. Then $f$ is a homeomorphism of
$\mathfrak{m}^{\times n}$ onto itself in the $\mathfrak{m}$-adic
topology. If, in addition, $R$ is a Henselian valued ring with
maximal ideal $\mathfrak{m}$, then $f$ is a homeomorphism of
$\mathfrak{m}^{\times n}$ onto itself in the valuation topology.
\end{corollary}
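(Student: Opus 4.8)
The plan is to derive Corollary~\ref{H-2} from Proposition~\ref{H-1} by first checking that a Henselian local ring, equipped with the $\mathfrak{m}$-adic topology, satisfies the hypotheses of condition (H), so that the proposition applies. Concretely, I would observe that in the $\mathfrak{m}$-adic topology $R$ is linearly topologized (the powers $\mathfrak{m}^{k}$ form a fundamental system of neighbourhoods of $0$), Hausdorff, and that $\mathfrak{m}$ is a closed ideal; the polynomials $f_{i}\in R[X]$ are in particular restricted power series in $R\{X\}$. By Remark~\ref{rem-char}, the Henselian hypothesis guarantees that $(R,\mathfrak{m})$ with this topology satisfies condition~(H). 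Hence Proposition~\ref{H-1} yields that $f$ is a bijection of $\mathfrak{m}^{\times n}$ onto itself.

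The substance of the corollary beyond Proposition~\ref{H-1} is the assertion of bicontinuity. For continuity of $f$, I would use that each polynomial $f_{i}$ is continuous for the $\mathfrak{m}$-adic topology: if $x\equiv x' \pmod{\mathfrak{m}^{k}}$ componentwise, then $f_{i}(x)\equiv f_{i}(x') \pmod{\mathfrak{m}^{k}}$, since $f_{i}(x)-f_{i}(x')$ lies in the ideal generated by the differences $x_{j}-x'_{j}$. This is an elementary algebraic estimate and shows $f$ is uniformly continuous on $\mathfrak{m}^{\times n}$. The same reasoning applied to the components of the inverse map is not available directly, since the inverse is a priori only a set-theoretic bijection, so I expect the continuity of $f^{-1}$ to be the main obstacle.

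To overcome it, I would show that $f$ is in fact open, or equivalently that $f^{-1}$ is continuous, by a Hensel-type approximation argument rather than by exhibiting an explicit formula for $f^{-1}$. Given $y_{0}=f(x_{0})$ with $x_{0}\in\mathfrak{m}^{\times n}$, and a target precision $\mathfrak{m}^{k}$, I would apply condition~(H) to the translated system $f(X)-y$ for $y$ ranging over a neighbourhood $y_{0}+\mathfrak{m}^{\times n}_{(k)}$, tracking how the unique solution $x=f^{-1}(y)$ depends on the perturbation of $y$. The invertibility of the Jacobian $J(\mathbf{0})\in R^{\times}$, which propagates to invertibility of $J$ on all of $\mathfrak{m}^{\times n}$ since $J(x)\equiv J(\mathbf{0}) \pmod{\mathfrak{m}}$ and elements congruent to a unit are units, is what controls the solution's sensitivity: a first-order Newton-type estimate gives $f^{-1}(y)-f^{-1}(y_{0})\in\mathfrak{m}^{\times n}_{(k)}$ whenever $y-y_{0}\in\mathfrak{m}^{\times n}_{(k)}$. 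This establishes continuity of $f^{-1}$ and hence that $f$ is a homeomorphism.

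For the final statement, suppose in addition that $R$ is a Henselian valued ring with maximal ideal $\mathfrak{m}$. Here I would note that on $\mathfrak{m}$ the valuation topology and the $\mathfrak{m}$-adic topology agree on the relevant neighbourhood filters: the sets $\mathfrak{m}^{k}$ are cofinal among the valuation balls contained in $\mathfrak{m}$, because $\mathfrak{m}$ consists of elements of positive valuation and the valuation is compatible with multiplication. Consequently a map is a homeomorphism of $\mathfrak{m}^{\times n}$ for the $\mathfrak{m}$-adic topology precisely when it is one for the valuation topology, and the valuation-topology statement follows immediately from the $\mathfrak{m}$-adic one already proved.
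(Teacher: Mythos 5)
Your treatment of the $\mathfrak{m}$-adic statement is essentially the paper's argument: the paper expands $f$ around an arbitrary point $a\in\mathfrak{m}^{\times n}$, writes $f(a+x)-f(a)=\mathcal{M}(a)\cdot\bigl(x+\mathcal{M}(a)^{-1}\cdot g(x)\bigr)$ with $g\in (X)^{2}R[X]$ and $\mathcal{M}(a)$ invertible over $R$ (because $J(a)\equiv J(\mathbf{0})\bmod\mathfrak{m}$), and reads off bicontinuity from the fact that the correction term is of higher order; your Newton-type estimate around $y_{0}=f(x_{0})$ is the same computation in different clothing. One inaccuracy in your first paragraph: a Henselian local ring need not be Hausdorff or complete for the $\mathfrak{m}$-adic topology (Henselian does not imply complete, and one can have $\bigcap_{k}\mathfrak{m}^{k}\neq 0$, indeed $\mathfrak{m}^{2}=\mathfrak{m}$), so you cannot verify Hensel's conditions for that topology. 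The correct route to bijectivity is Remark~\ref{rem-char} exactly as stated, i.e.\ condition (H) for the \emph{discrete} topology; that suffices because the conclusion of (H) is purely algebraic.

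The genuine gap is in your final paragraph. The claim that the powers $\mathfrak{m}^{k}$ are cofinal among the valuation balls contained in $\mathfrak{m}$ holds only for rank-one \emph{discrete} valuations. If the value group is densely ordered (e.g.\ divisible), then $\mathfrak{m}^{2}=\mathfrak{m}$, so the $\mathfrak{m}$-adic topology induced on $\mathfrak{m}$ is indiscrete while the valuation topology is Hausdorff; for valuations of higher rank the two filtrations are likewise incomparable. Hence the valuation-topology assertion cannot be deduced from the $\mathfrak{m}$-adic one by identifying the two topologies, and this is precisely the case the corollary is needed for, since the paper works over arbitrary Henselian valued fields. The repair is to run the same local linearization directly against valuation balls: for $x\in\mathfrak{m}^{\times n}$ one has $v\bigl(g_{i}(x)\bigr)\geq 2\min_{j}v(x_{j})>\min_{j}v(x_{j})$ because $g\in (X)^{2}R[X]$, and $\mathcal{M}(a)^{-1}$ has entries in $R$, so the ultrametric inequality gives $\min_{i}v\bigl(x_{i}+(\mathcal{M}(a)^{-1}\cdot g(x))_{i}\bigr)=\min_{i}v(x_{i})$; since multiplication by a matrix invertible over $R$ preserves the balls $\{\,x:\ \min_{i}v(x_{i})\geq\gamma\,\}$ in both directions, $f$ and $f^{-1}$ exchange valuation neighbourhoods of $a$ and of $f(a)$. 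This is exactly what the paper's single displayed identity delivers for both topologies at once.
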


\begin{proof}
Obviously, $J(a) \in R^{\times}$ for every $a \in
\mathfrak{m}^{\times n}$. Let $\mathcal{M}$ be the jacobian matrix
of $f$. Then
$$ f(a + x) - f(a) = \mathcal{M}(a) \cdot x + g(x) = \mathcal{M}(a)
   \cdot (x + \mathcal{M}(a)^{-1} \cdot g(x)) $$
for an $n$-tuple $g = (g_{1},\ldots,g_{n})$ of polynomials
$g_{1},\ldots,g_{n} \in (X)^{2} R[X]$. Hence the assertion follows
easily.
\end{proof}

The proposition below is a version of the inverse mapping theorem.

\begin{proposition}\label{H-3}
If $f(\mathbf{0}) = \mathbf{0}$ and $e :=J(\mathbf{0}) \neq 0$,
then $f$ is an open embedding of $e^{2} \cdot \mathfrak{m}^{\times
n}$ into $e \cdot \mathfrak{m}^{\times n}$.
\end{proposition}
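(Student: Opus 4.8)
The plan is to reduce to Corollary~\ref{H-2} by a rescaling that absorbs the non-unit factor $e$, using the classical adjoint to repair the linear part.

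First I would expand $f$ about the origin. Writing $A := \mathcal{M}(\mathbf{0})$ for the Jacobian matrix at $\mathbf{0}$, the identity from the proof of Corollary~\ref{H-2} (taken at $a=\mathbf{0}$, where $f(\mathbf{0})=\mathbf{0}$) gives
$$ f(x) = A\cdot x + g(x), \qquad g = (g_1,\dots,g_n),\ \ g_i \in (X)^2 R[X], $$
with $\det A = J(\mathbf{0}) = e \neq 0$. Let $N$ denote the adjugate of $A$, so that $AN = NA = e\,I_n$ and $N$ has entries in $R$. Now substitute $x = e^{2} w$ with $w \in \mathfrak{m}^{\times n}$; since each $g_i$ has order $\geq 2$, one has $g(e^{2} w) = e^{4}\bar g(w)$ for an $n$-tuple $\bar g$ of polynomials of order $\geq 2$ with coefficients in $R$. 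Hence
$$ f(e^{2} w) = e^{2} A\cdot w + e^{4}\bar g(w) = e^{2}\,\Phi(w), \qquad \Phi(w) := A\cdot w + e^{2}\bar g(w). $$
As $x=e^{2}w$ runs bijectively over $e^{2}\mathfrak{m}^{\times n}$ while $w$ runs over $\mathfrak{m}^{\times n}$, it suffices to analyse $\Phi$ on $\mathfrak{m}^{\times n}$.

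The key step repairs the linear part via $N$. Multiplying $\Phi$ on the left by $N$ and using $NA = e\,I_n$ gives
$$ N\cdot\Phi(w) = e\,w + e^{2} N\bar g(w) = e\,\Psi(w), \qquad \Psi(w) := w + e\,N\bar g(w). $$
Here $\Psi$ is an $n$-tuple of polynomials in $R[X]$ with $\Psi(\mathbf{0})=\mathbf{0}$ and, since $e\,N\bar g$ has order $\geq 2$, with $J_{\Psi}(\mathbf{0}) = \det I_n = 1 \in R^{\times}$. Thus Corollary~\ref{H-2} applies to $\Psi$, which is therefore a homeomorphism of $\mathfrak{m}^{\times n}$ onto itself in the valuation topology. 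Multiplying $N\Phi(w)=e\Psi(w)$ on the left by $A$ and using $AN = e\,I_n$ yields $e\,\Phi(w) = e\,A\Psi(w)$; cancelling the nonzero $e$ (recall $R$ is a domain) gives the clean factorization
$$ \Phi(w) = A\cdot\Psi(w), \qquad\text{whence}\qquad f(e^{2} w) = e^{2}A\cdot\Psi(w). $$

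Finally I would read off the conclusion. On $e^{2}\mathfrak{m}^{\times n}$ the map $f$ is the composite of the scaling $x \mapsto e^{-2}x$ (a bijection onto $\mathfrak{m}^{\times n}$), the homeomorphism $\Psi$, and the map $v \mapsto e^{2}A\cdot v$. Injectivity is immediate: $e^{2}A\Psi(w_1)=e^{2}A\Psi(w_2)$ forces $A\Psi(w_1)=A\Psi(w_2)$ after cancelling $e^{2}$, and since $\det A = e\neq 0$ the matrix $A$ is injective, so $\Psi(w_1)=\Psi(w_2)$ and hence $w_1=w_2$. For the openness I would pass to the fraction field $K$: as $\det A\neq 0$ and $e\neq 0$, both $A$ and multiplication by $e^{2}$ are $K$-linear automorphisms of $K^{n}$, hence homeomorphisms; since $\mathfrak{m}^{\times n}$ is open in $K^{n}$, the image $f(e^{2}\mathfrak{m}^{\times n}) = e^{2}A\,\mathfrak{m}^{\times n}$ is open in $K^{n}$, and it lies in $e\,\mathfrak{m}^{\times n}$ because $A\mathfrak{m}^{\times n}\subseteq\mathfrak{m}^{\times n}$. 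Thus $f$ restricts to a homeomorphism of $e^{2}\mathfrak{m}^{\times n}$ onto an open subset of $e\,\mathfrak{m}^{\times n}$, i.e.\ an open embedding. The main obstacle is precisely that $e=J(\mathbf{0})$ need not be a unit, so Corollary~\ref{H-2} cannot be invoked on $f$ directly; the difficulty is to choose the scaling exponents ($e^{2}$ on the source, $e$ on the target) so that after the adjoint substitution the perturbation stays of order $\geq 2$ with $R$-coefficients and the Jacobian becomes the unit $1$, while the verification of genuine openness is what forces the passage to $K^{n}$.
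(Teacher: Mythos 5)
Your proof is correct and follows essentially the same route as the paper: both absorb the non-unit Jacobian determinant by means of the adjugate $\mathcal{N}$ of $\mathcal{M}(\mathbf{0})$ together with a rescaling by powers of $e$, reducing matters to Corollary~\ref{H-2} applied to a perturbation of the identity of the form $X + \mathcal{N}(\cdots)$. The only difference is cosmetic: the paper substitutes $X \mapsto eX$ and solves $f(eX)=y$ for $y\in e^{2}\cdot\mathfrak{m}^{\times n}$, producing the explicit continuous inverse $y\mapsto e\,h^{-1}(\mathcal{N}\cdot y/e^{2})$, whereas you substitute $X\mapsto e^{2}X$ and read the image off the factorization $f(e^{2}w)=e^{2}\mathcal{M}(\mathbf{0})\cdot\Psi(w)$.
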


\begin{proof}
Let $\mathcal{N}$ be the adjugate of the matrix
$\mathcal{M}(\mathbf{0})$ and $y = e^{2}b$ with $b \in
\mathfrak{m}^{\times n}$. Since
$$ f(X) = e \mathcal{M}(a) \cdot X + e^{2} g(x) $$
for an $n$-tuple $g = (g_{1},\ldots,g_{n})$ of polynomials
$g_{1},\ldots,g_{n} \in (X)^{2} R[X]$, we get the equivalences
$$ f(eX) = y \ \Leftrightarrow \ f(eX) - y = \mathbf{0} \ \Leftrightarrow \
   e \mathcal{M}(\mathbf{0}) \cdot (X + \mathcal{N}g(X) - \mathcal{N}b) = \mathbf{0}. $$
Applying Corollary~\ref{H-2} to the map $h(X) := X +
\mathcal{N}g(X)$, we get
$$ f^{-1}(y) = ex \ \Leftrightarrow \ x = h^{-1}(\mathcal{N}b) \
   \ \text{and} \ \ f^{-1}(y) = e h^{-1}(\mathcal{N} \cdot y/e^{2}). $$
This finishes the proof.
\end{proof}

Further, let $R$ be a Henselian valued ring with maximal ideal
$\mathfrak{m}$. Let $0 \leq r < n$, $p = (p_{r+1}, \ldots, p_{n})$
be an $(n-r)$-tuple of polynomials $p_{r+1},\ldots,p_{n} \in
R[X]$, $X = (X_{1},\ldots,X_{n})$,  and
$$ J := \frac{\partial(p_{r+1}, \ldots,
   p_{n})}{\partial(X_{r+1},\ldots,X_{n})}, \ \ e := J(\mathbf{0}). $$
Suppose that
$$ \mathbf{0} \in V := \{ x \in R^{n}: p_{r+1}(x) = \ldots = p_{n}(x) =
   0 \}. $$
In a similar fashion as above, we can establish the following
version of the implicit function theorem.

\begin{proposition}\label{implicit}
If $e \neq 0$, then there exists a continuous map
$$ \phi: (e^{2} \cdot \mathfrak{m})^{\times r} \to
   (e \cdot \mathfrak{m})^{\times (n-r)} $$
such that $\phi(0)=0$ and the graph map
$$ (e^{2} \cdot \mathfrak{m})^{\times r} \ni u \to (u,\phi(u)) \in
   (e^{2} \cdot \mathfrak{m})^{\times r} \times
   (e \cdot \mathfrak{m})^{\times (n-r)} $$
is an open embedding into the zero locus $V$ of the polynomials
$p$.
\end{proposition}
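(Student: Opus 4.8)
The plan is to reduce the statement to the inverse mapping theorem of Proposition~\ref{H-3} by the classical device of \emph{straightening} the variety $V$ with an auxiliary coordinate change. First I would introduce the polynomial map
$$ F = (F_{1}, \ldots, F_{n}) : R^{n} \to R^{n}, \qquad F(X) := (X_{1}, \ldots, X_{r}, p_{r+1}(X), \ldots, p_{n}(X)). $$
Since $\mathbf{0} \in V$, we have $p_{r+1}(\mathbf{0}) = \cdots = p_{n}(\mathbf{0}) = 0$, hence $F(\mathbf{0}) = \mathbf{0}$. The Jacobian matrix of $F$ is block lower-triangular, with the identity block $I_{r}$ in the upper-left corner and $\partial(p_{r+1}, \ldots, p_{n})/\partial(X_{r+1}, \ldots, X_{n})$ in the lower-right corner; consequently its Jacobian determinant equals $J$, and in particular takes the nonzero value $e$ at $\mathbf{0}$. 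Thus $F$ satisfies the hypotheses of Proposition~\ref{H-3}.

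Next I would invoke Proposition~\ref{H-3} to obtain a continuous inverse $F^{-1}$ of $F$ on a suitable neighbourhood of $\mathbf{0}$, evaluated at the points $(u, \mathbf{0}) = (u_{1}, \ldots, u_{r}, 0, \ldots, 0)$ with $u \in (e^{2} \cdot \mathfrak{m})^{\times r}$. I would then define $\phi(u)$ to be the tuple of the last $n - r$ coordinates of $F^{-1}(u, \mathbf{0})$, so that
$$ F^{-1}(u, \mathbf{0}) = (u, \phi(u)). $$
Continuity of $\phi$ follows from continuity of $F^{-1}$ together with continuity of $u \mapsto (u, \mathbf{0})$ and of the coordinate projection, and the normalization $\phi(0) = 0$ follows at once from $F^{-1}(\mathbf{0}) = \mathbf{0}$. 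By construction the last $n - r$ coordinates of $F(u, \phi(u)) = (u, \mathbf{0})$ vanish, that is, $p_{r+1}(u, \phi(u)) = \cdots = p_{n}(u, \phi(u)) = 0$, so each graph point $(u, \phi(u))$ lies in $V$.

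It remains to check that the graph map $u \mapsto (u, \phi(u))$ is an open embedding into $V$, and this I expect to be the main obstacle. That it is an embedding is immediate, since it coincides with the restriction of the homeomorphism $F^{-1}$ to the slice of points $(u, \mathbf{0})$. The delicate point is openness of its image inside $V$. Here I would use the key observation that $F$ carries $V$ into the coordinate slice $X_{r+1} = \cdots = X_{n} = 0$: indeed, if $v \in V$ then all $p_{j}(v) = 0$, so $F(v) = (v_{1}, \ldots, v_{r}, 0, \ldots, 0)$ lies in that slice. Consequently, writing $U$ for the open neighbourhood on which $F$ is a homeomorphism onto the open set $F(U)$, one has $V \cap U = F^{-1}(S \cap F(U))$, where $S$ denotes the slice; since the right-hand side is precisely the graph of $\phi$ and $U$ is open, this exhibits the graph as an open subset of $V$, completing the argument. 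The one bookkeeping subtlety to resolve carefully is matching the stated domain $(e^{2} \cdot \mathfrak{m})^{\times r}$ and codomain $(e \cdot \mathfrak{m})^{\times (n-r)}$ to the sets on which Proposition~\ref{H-3} actually guarantees $F^{-1}$, so that $\phi$ is well defined and the graph indeed lands in $(e^{2} \cdot \mathfrak{m})^{\times r} \times (e \cdot \mathfrak{m})^{\times (n-r)}$.
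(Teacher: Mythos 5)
Your proposal is correct and follows essentially the same route as the paper: both introduce the auxiliary map $f(X)=(X_{1},\ldots,X_{r},p(X))$ with Jacobian determinant $e$ at $\mathbf{0}$ and obtain $\phi$ from the last $n-r$ coordinates of $f^{-1}(u,\mathbf{0})$, the paper merely rerunning the explicit computation of Proposition~\ref{H-3} to get the closed formula $\phi(u)=e\,h^{-1}(\mathcal{N}\cdot(u,0)/e^{2})$ rather than citing that proposition as a black box. Your extra care in verifying openness of the image in $V$ via $V\cap U=f^{-1}(S\cap f(U))$ only spells out what the paper leaves implicit.
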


\begin{proof}
Put $f(X) := (X_{1},\ldots,X_{r},p(X))$; of course, the jacobian
determinant of $f$ at $\mathbf{0} \in R^{n}$ is equal to $e$. Keep
the notation from the proof of Proposition~\ref{H-3}, take any $b
\in e^{2} \cdot \mathfrak{m}^{\times r}$ and put $y := (e^{2}b,0)
\in R^{n}$. Then we have the equivalences
$$ f(eX)=y \ \Leftrightarrow \ f(eX) - y= \mathbf{0} \
   \Leftrightarrow \ e \mathcal{M}(\mathbf{0}) \cdot
   (X + \mathcal{N} g(X) - \mathcal{N} \cdot (b,0)) = \mathbf{0}. $$
Applying Corollary~\ref{H-2} to the map $h(X) := X +
\mathcal{N}g(X)$, we get
$$ f^{-1}(y) = ex \ \Leftrightarrow \ x = h^{-1}(\mathcal{N} \cdot (b,0)) \
   \ \text{and} \ \ f^{-1}(y) = e h^{-1}(\mathcal{N} \cdot y/e^{2}). $$
Therefore the function
$$ \phi(u) := e h^{-1}(\mathcal{N}\cdot (u,0)/e^{2}) $$
is the one we are looking for.
\end{proof}

The density property of Henselian non-trivially valued fields
follows immediately from Proposition~\ref{implicit} and the
Jacobian criterion for smoothness (see
e.g.~\cite[Theorem~16.19]{Eis}), recalled below for the reader's
convenience.

\begin{theorem}\label{smooth}
Let $I = (p_{1}, \ldots, p_{s}) \subset K[X]$, $X =
(X_{1},\ldots,X_{n})$ be an ideal, $A := K[X]/I$ and $V :=
\mathrm{Spec}\, (A)$. Suppose the origin $\mathbf{0} \in K^{n}$
lies in $V$ (equivalently, $I \subset (X)K[X]$) and $V$ is of
dimension $r$ at $\mathbf{0}$. Then the Jacobian matrix
$$ \mathcal{M} = \left[
   \frac{\partial p_{i}}{\partial X_{j}}(\mathbf{0}): \: i=1,\ldots,s, \: j=1,\ldots,n
   \right] $$
has rank $\leq (n-r)$ and $V$ is smooth at $\mathbf{0}$ iff
$\mathcal{M}$ has exactly rank $(n-r)$. Furthermore, if $V$ is
smooth at $\mathbf{0}$ and
$$ \det \left[
   \frac{\partial p_{i}}{\partial X_{j}}(\mathbf{0}): \: i,j=r+1,\ldots,n
   \right] \neq 0, $$
then $p_{r+1},\ldots,p_{n}$ generate the localization $I \cdot
K[X]_{(X_{1},\ldots,X_{n})}$ of the ideal $I$ with respect to the
maximal ideal $(X_{1},\ldots,X_{n})$.
\end{theorem}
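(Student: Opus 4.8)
The plan is to pass to the local ring of $V$ at $\mathbf{0}$ and to recover $\mathrm{rank}\,\mathcal{M}$ from its cotangent space. Localize at the origin: let $\mathfrak{n}=(X_{1},\ldots,X_{n})$ be the maximal ideal of $S:=K[X]_{(X_{1},\ldots,X_{n})}$ and put $\mathcal{O}:=S/IS$, a Noetherian local ring with maximal ideal $\mathfrak{m}:=\mathfrak{n}/IS$ and residue field $K$; by hypothesis $\dim\mathcal{O}=r$. The cotangent space $\mathfrak{n}/\mathfrak{n}^{2}$ is an $n$-dimensional $K$-vector space with basis the classes of $X_{1},\ldots,X_{n}$, and since each $p_{i}$ lies in $\mathfrak{n}$ one has $p_{i}\equiv\sum_{j}\frac{\partial p_{i}}{\partial X_{j}}(\mathbf{0})\,X_{j}\pmod{\mathfrak{n}^{2}}$, so the class of $p_{i}$ is exactly the $i$-th row of $\mathcal{M}$.

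First I would establish the identity $\dim_{K}\mathfrak{m}/\mathfrak{m}^{2}=n-\mathrm{rank}\,\mathcal{M}$. Since $\mathfrak{m}/\mathfrak{m}^{2}\cong\mathfrak{n}/(\mathfrak{n}^{2}+IS)$, it suffices to show that the image of $IS$ in $\mathfrak{n}/\mathfrak{n}^{2}$ is the row space of $\mathcal{M}$. An arbitrary element of $IS$ has the form $\sum_{i}g_{i}p_{i}$ with $g_{i}\in S$, and as $p_{i}\in\mathfrak{n}$ we get $g_{i}p_{i}\equiv g_{i}(\mathbf{0})p_{i}\pmod{\mathfrak{n}^{2}}$; thus modulo $\mathfrak{n}^{2}$ the ideal $IS$ reduces to the $K$-span of the classes of $p_{1},\ldots,p_{s}$, that is, to the row space of $\mathcal{M}$. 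The identity follows. Parts (1) and (2) are then immediate from the standard inequality $\dim\mathcal{O}\le\dim_{K}\mathfrak{m}/\mathfrak{m}^{2}$: it gives $r\le n-\mathrm{rank}\,\mathcal{M}$, i.e. $\mathrm{rank}\,\mathcal{M}\le n-r$, and—because $\mathbf{0}$ is a $K$-rational point, where smoothness of $V$ is equivalent to regularity of $\mathcal{O}$—equality $\mathrm{rank}\,\mathcal{M}=n-r$ holds exactly when $\mathcal{O}$ is regular, i.e. when $V$ is smooth at $\mathbf{0}$.

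For the final assertion I would compare two regular local rings. Set $J:=(p_{r+1},\ldots,p_{n})S\subseteq IS$. The nonvanishing of the displayed $(n-r)\times(n-r)$ minor says precisely that the classes of $p_{r+1},\ldots,p_{n}$ are linearly independent in $\mathfrak{n}/\mathfrak{n}^{2}$, so $\dim_{K}\mathfrak{n}/(\mathfrak{n}^{2}+J)=r$ and $S/J$ has embedding dimension $r$; as $J$ is generated by $n-r$ elements, Krull's height theorem gives $\dim S/J\ge n-(n-r)=r$, whence $S/J$ is regular of dimension $r$. The inclusion $J\subseteq IS$ yields a surjection of local rings $S/J\twoheadrightarrow\mathcal{O}=S/IS$, both regular (the target by part (2), using smoothness) of dimension $r$. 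Its kernel is a prime of the domain $S/J$ whose quotient again has dimension $r$; since $S/J$ is catenary and equidimensional, the kernel has height $0$ and hence is zero. Therefore $J=IS$, which is the asserted description of the localization.

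The main obstacle is not a single computation but the careful bookkeeping in the second paragraph together with the dimension-theoretic input: one must verify that the image of the \emph{whole} ideal $IS$ in $\mathfrak{n}/\mathfrak{n}^{2}$—not merely of the chosen generators—is controlled by $\mathcal{M}$, and one must invoke the embedding-dimension inequality and the fact that regular local rings are catenary equidimensional domains, so that a surjection between two of them of equal dimension is forced to be an isomorphism. The identification of smoothness with regularity of $\mathcal{O}$ causes no trouble here, the point $\mathbf{0}$ being $K$-rational.
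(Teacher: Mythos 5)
Your argument is correct, but note that the paper does not actually prove this statement: Theorem~\ref{smooth} is quoted verbatim from Eisenbud (Theorem~16.19) ``for the reader's convenience,'' so there is no in-paper proof to compare against. What you have written is the standard cotangent-space proof and it holds together: the computation that the image of $IS$ in $\mathfrak{n}/\mathfrak{n}^{2}$ equals the row space of $\mathcal{M}$ (and not just the span of the chosen generators' classes) is exactly the point that needs checking, and your reduction $g_{i}p_{i}\equiv g_{i}(\mathbf{0})p_{i}\pmod{\mathfrak{n}^{2}}$ settles it; the inequality $\dim\mathcal{O}\le\dim_{K}\mathfrak{m}/\mathfrak{m}^{2}$ then gives parts (1) and (2), using that smoothness at the $K$-rational point $\mathbf{0}$ is equivalent to regularity of $\mathcal{O}$ (valid in any characteristic for rational points, which matters here since $K$ need not be perfect). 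For the last part, the chain ``$S/J$ has embedding dimension $r$ and dimension $\ge r$ by Krull, hence is regular of dimension $r$; a surjection of Cohen--Macaulay local domains of equal dimension is an isomorphism'' is sound, since the kernel is prime with $\dim(S/J)/\mathfrak{p}=\dim S/J$ and Cohen--Macaulayness forces it to have height $0$. Two small points you could make explicit: the identification of ``dimension $r$ at $\mathbf{0}$'' with $\dim\mathcal{O}=r$ (automatic at a closed point of a finite-type $K$-scheme), and the fact that the linear part of $p_{i}$ is $\sum_{j}\frac{\partial p_{i}}{\partial X_{j}}(\mathbf{0})X_{j}$ as a formal identity, so no characteristic-zero assumption is hidden there. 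Neither is a gap.
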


Let us mention that we are currently preparing a series of papers
devoted to geometry of algebraic subsets of $K^{n}$, i.al.\ to the
results of our article~\cite{Now2}, for the case where the ground
field $K$ is an arbitrary Henselian valued field of
equicharacteristic zero. Finally, I wish to thank Laurent
Moret-Bailly for pointing out the implicit function theorem in the
paper~\cite{G-G-MB}.

\vspace{2ex}

\begin{small}
Institute of Mathematics

Faculty of Mathematics and Computer Science

Jagiellonian University


ul.~Profesora \L{}ojasiewicza 6, 30-348 Krak\'{o}w, Poland

{\em e-mail address: nowak@im.uj.edu.pl}
\end{small}

\end{document}